\journal{ }
\newtheorem{theorem}{Theorem}
\newtheorem{ThmObs}{Observation}
\newlength\Colsep
\newcolumntype{Y}{>{\centering\arraybackslash}X}
\begin{document}
	\begin{frontmatter}
		\title{Column Generation and Lazy constraints for solving the Liner Ship Fleet Repositioning Problem with cargo flows}
		\author{Robin H. Pearce, Alexis Tyler \& Michael Forbes}
		\address{School of Mathematics and Physics, University of Queensland, Australia}
		
		\begin{abstract}
			We consider an important problem in the shipping industry known as the liner shipping fleet repositioning problem (LSFRP). We examine a public data set for this problem including many instances which have not previously been solved to optimality. We present several improvements on a previous mathematical formulation, however the largest instances still result in models too difficult to solve in reasonable time. The implementation of column generation reduces the model size significantly, allowing all instances to be solved, with some taking two to three hours. A novel application of lazy constraints further reduces the size of the model, and results in all instances being solved to optimality in under four minutes. 
		\end{abstract}
		
		\begin{keyword}
			Liner Shipping\sep Network flows\sep Scheduling\sep Column Generation\sep Lazy constraints
		\end{keyword}
	\end{frontmatter}

\section{Introduction}
The shipping industry is responsible for transporting many billion tons of cargo each year, almost 9.6 billion tons in 2013, with a large percentage of this (35.4\%) made up by container shipping \cite{UNCTAD2014}. Optimising the practices of the shipping industry to maximise profits is the subject of considerable academic and commercial interest. Much of this research centres around the principle of optimising scheduling to maximise profits. The main areas of research are in the fleet deployment problem and the network design problem, which are related to this principle.

Liner shipping operates on a fixed schedule and uses a standard container size, the ISO container \cite{Tierney2014}. Within the shipping industry, ships are often referred to as vessels, and the set of ships which a liner shipping company operates is called a fleet. An operator is the person responsible for the operation of the shipping company and its fleet. These companies operate within a shipping network, which is a set of ports connected by arcs over which the fleet may travel. While liner shipping does operate on a fixed, periodic schedule, ships within the fleet often need to be moved between services to meet the demand of clients. This often happens to create new or modify existing services. However, repositioning vessels is an expensive exercise due to fuel costs and potential losses in revenue, so optimising these journeys is of particular interest within the industry. 

In this paper we present several improvements for solving the LSFRP. We begin with an improved MIP model for the LSFRP which ignores some less important aspects of the problem. Next, we apply column generation and present a proof of the integer properties of the column generation master problem. Finally, we present a novel use of lazy constraints to massively speed up the solution of the column generation formulation.

This paper is organised as follows: the remainder of this section will be a review of the literature around shipping network optimisation, followed by a review more specific to the LSFRP. In Section \ref{SecProbDescription} we will describe the problem in more detail, and in Section \ref{SecImprovedTech} we will look at the improvements we have made to the original formulation. Section \ref{SecResults} contains computational results comparing our different formulations, and Section \ref{SecDiscussion} is a discussion about various aspects of this problem and the associated public data set.

While the optimisation of network flow models is well studied, very little work focusses on the application of these models to shipping networks. This is emphasised in a survey conducted by Ronen \cite{Ronen1993} in which he states that, while the development of these models has the potential to have a large economic impact, the lack of literature in the field has meant that the application of these models in industry has been limited. Ronen details the models and techniques that were used in ship scheduling in the previous decade. He notes that the majority of the work was focused on four main areas: fleet size, mix and deployment; inventory routing; cruising speed; and ship scheduling. It is interesting to note that only a handful of the papers reviewed in this survey pertain to liner shipping. 

One such study is by Brown, Graves and Ronen \cite{Brown1987}, and focusses on scheduling ocean transportation of crude oil. The authors use an elastic set partitioning model which incorporates all fleet cost components and optimises for speed as well as scheduling. However, this model assumes that all ships are of similar sizes, and that one unit of cargo is a full shipload which has only a single discharging port, which, in reality, are not feasible assumptions. With these assumptions the authors are able to solve this problem with thousands of binary variables to an optimal integer solution in less than a minute. 

Another study is by Rana and Vickson \cite{Rana1991}, in which they use Lagrangian Relaxation to solve the container ship routing problem. Their model incorporates multiple ships and solves for the optimal schedule and amount of cargo transported by the fleet to maximise profit. The constraints in this model cover the carrying capacity of the ships, available cargo, time duration, and the requirement of having a connected feasible route for each ship. This model does not incorporate more complex scheduling constraints for the ships, for example a cargo delivery deadline, which limits the potential applications for this model.

In their 1992 study, Rathi, Church and Solanki \cite{Rathi1992} explore allocating resources to support a multi-commodity flow with time windows. This study presents three different linear programming (LP) models to minimise costs associated with this problem. All three formulations achieve this by minimising lateness, which is determined by the amount of assets of each type that should be allocated to a given route in each time period.  

The review by Christiansen, Fagerholt and Ronen \cite{Christiansen2004} provides an overview of the published research on ship routing and scheduling from 1994-2002. The review is split into several sections. The first section looks at strategic fleet planning, the design of fleets. The second section is tactical and operational fleet planning which explores ship routing and scheduling problems for different types of shipping. Fagerholt and Christiansen \cite{Fagerholt2000} solve a combined ship scheduling and allocation problem for industrial shipping using set partitioning. Next, they consider a problem related to robust ship scheduling with multiple time windows using a set partitioning method, where all feasible ship schedules are found prior to the model being solved \cite{Christiansen2002}. Sherali \cite{Sherali1999} investigates fleet management models and algorithms for an oil-tanker routing and scheduling problem using a mixed integer program (MIP) with heuristics. 

The third and final section of their review covers naval applications and other related problems. Of the papers reviewed pertaining to ship routing and scheduling, most used dynamic programming (DP), integer programs (IP) or heuristic algorithms to solve the problem. The authors note that trends in the literature display a growing need for research into these types of problems.

There are a number of studies dedicated to solving the fleet deployment problem, such as the one by Powell and Perkins \cite{Powell1997}, and the network design problem, for example those by Agarwal and Ergun \cite{Agarwal2008}, \'{A}lvarez \cite{Alvarez2009} and Brouer et. al. \cite{Brouer2013}. These problems are related to the decisions associated with the design of networks and timetables, as opposed to determining optimal paths for ships through a predetermined network. Agarwal and Ergun provide a unique formulation of ship scheduling and network design for cargo routing in liner shipping. The authors use a mixed integer linear program to solve the ship scheduling and cargo routing problem. Their model includes constraints such as frequency of operation and transshipment of cargo. 

Agarwal and Ergun also develop several solution techniques and compare their success and efficiency. The described algorithms are a greedy heuristic, column generation and Benders decomposition. They also outline an iterative search algorithm to generate schedules for liner shipping. Computational tests were performed on each of the algorithms using generated instances and it was found that, while the greedy heuristic was able to solve the problem very quickly, the solution quality was low. It was also found that the column generation and Benders' decomposition approaches achieved similar results in terms of accuracy, however the Benders' formulation was significantly faster, particularly when the instance size was large.

\subsection{Liner ship fleet repositioning problem}
The LSFRP is a type of network design problem which involves repositioning ships between service routes while maximising profit. This is achieved by visiting ports and delivering cargo while repositioning. However, despite the body of literature devoted to liner shipping and its surrounding problems, very little of this research is focused on the liner shipping fleet repositioning problem.

The first study that explores the LSFRP is by Tierney et. al. \cite{Tierney2012}. In this paper, the authors solve a simplified version of the LSFRP without cargo flows, empty equipment, or sail-on-service (SOS) opportunities (discussed further below). Tierney and Jensen then conduct a study which continues to explore this problem and incorporated cargo flows \cite{Tierney2012b}. In this paper, the authors use a mixed-integer program (MIP) in conjunction with a constructed graph to solve the LSFRP. This graph incorporates many of the LSFRP-specific constraints (such as SOS opportunities) so that they can be removed from the model formulation. This approach is able to solve several instances to optimality, however there are many larger instances where the problem can not be solved, as the solver runs out of memory or exceeds the maximum CPU time of one hour.

Another approach to solving the LSFRP is proposed by Kelareva, Tierney and Kilby \cite{Kelareva2014}. In this study, they solve the full LSFRP with SOS opportunities, however they do not incorporate cargo flows into their model. A constraint programming (CP) method is used with lazy clause generation, and is tested against the MIP in \cite{Tierney2012}. After testing the different models on a data set, the CP method is found to be faster than the MIP for all instances. However, this only occurs after choosing a search strategy for the particular problem. The authors note that, without sufficient understanding of the problem and of CP modelling techniques, it is difficult to choose a search strategy that is both fast and successfully finds optimal solutions. Furthermore, the CP method can not be extended to allow pre-computations or chaining of SOS or opportunities to carry empty cargo containers. 

The most recent study on the LSFRP is by Tierney et. al. \cite{Tierney2014}, which expands on the work by Tierney and Jensen from 2012 \cite{Tierney2012b}. They improve the model, provide a public data set, and use a heuristic approach. This model was able to incorporate many complex aspects of the LSFRP, including SOS opportunities, phase-in/phase-out requirements, and flexible arcs. Some of these (SOS opportunities and phase-in/phase-out requirements) are processed into the graph structure, along with sailing costs and cabotage restrictions. The MIP forms a ``disjoint path problem in which a fractional multi-commodity flow is allowed to flow over arcs in the vessel paths, along with a small scheduling component in the flexible nodes'' \cite{Tierney2014}. 

\section{Problem description and model formulation}\label{SecProbDescription}
The LSFRP consists of finding sequences of activities that move vessels between services in a liner shipping network, while maximising profit by trading off ship moving costs and cargo flow incomes \cite{Tierney2014}. ``Liner shipping services are composed of multiple slots, each of which represents a cycle that is assigned to a particular vessel''. The slots contain nodes or ports which must be visited by vessels at specific times in sequence. When a vessel is assigned a slot, it sails to all of its ports in order and delivers its cargo. Figure \ref{f-slots} shows an example of a service with three slots (represented by the differently styled lines in the graph) and five ports ($a,b,c,d,e$). The diagram shows that each slot takes three weeks to return to the start of the cycle, so three ships would be needed to run this service weekly. An LSFRP needs to be solved when we are transitioning a fleet from one set of services to a new one.

\begin{figure}[t]
	\centering
	\includegraphics[scale=0.6]{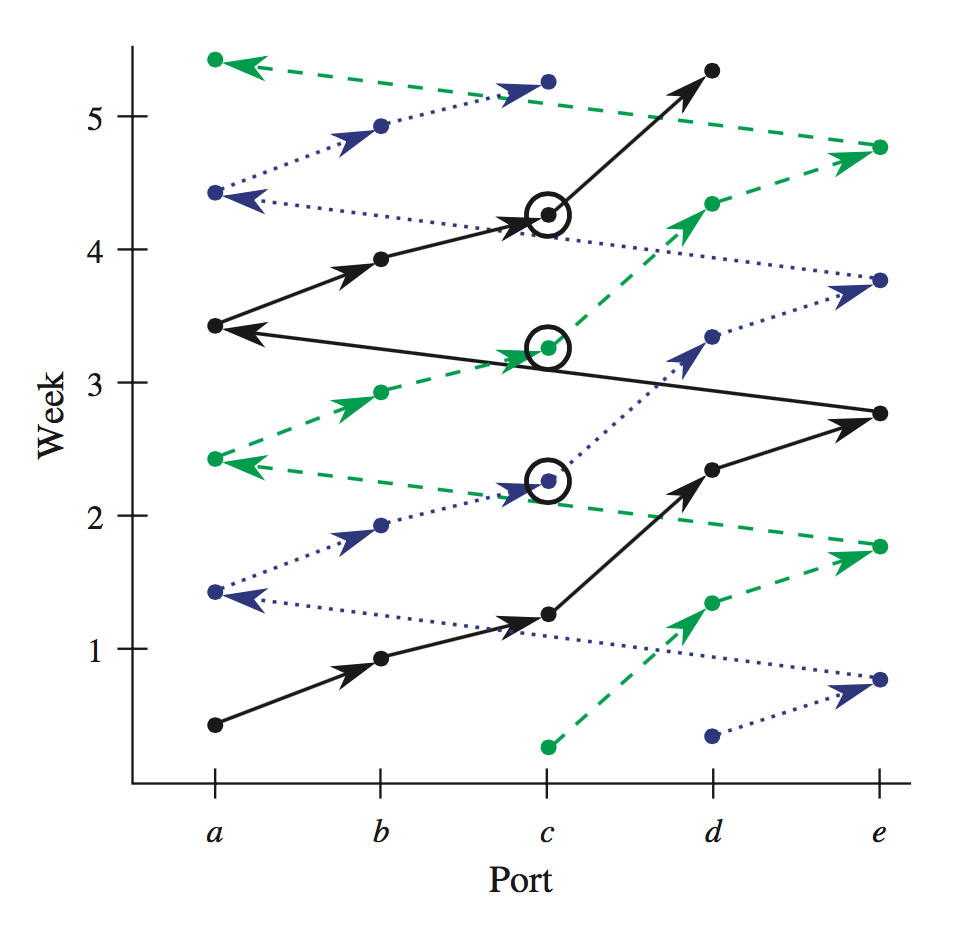}
	\caption{A time-space graph of a service with three vessels. Image sourced from Tierney et. al. (2014) \protect\cite{Tierney2014}}
	\label{f-slots}
\end{figure}

Another aspect of repositioning that needs to be taken into account is the time constraints. The time at which a ship may begin repositioning is known as the phase-out time. The ship must finish repositioning by the phase-in time of the goal service. In between these two times the ship is available for repositioning and is able to undertake a number of activities to both reach its goal service and reduce costs. An example of this can be seen in Figure \ref{f-slots}. The latest phase in time is at port c in week 2 and the phase out time is after all the hollow circles \cite{Tierney2014}. Between these points a vessel may undertake repositioning activities. 

The LSFRP is best described using Figure \ref{f-LSFRP}. This shows a ship which needs to be repositioned from its initial service (Chennai Express) to a goal service (Intra-WCSA). During repositioning the vessel can deliver cargo to ports to offset the cost of moving the ship, thus cargo flows are an important aspect of the problem. One way to do this is to take advantage of sail-on-service (SOS) opportunities, which are situations in which a repositioning ship can replace an on-service vessel for part of its service in order to reduce costs (by not having two ships sailing on the same course unnecessarily). There are two main methods of performing a SOS opportunity: transhipping, where all cargo from the on-service ship is moved onto the repositioning ship at a port, or parallel sailing, where the two ships visit the same ports sequentially and the on-service vessel only unloads cargo, while the repositioning vessel only loads cargo.

\begin{figure}[t]
	\centering
	\includegraphics[scale=0.75]{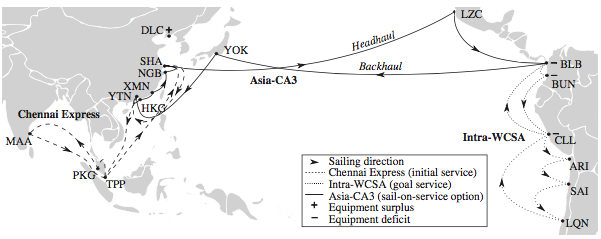}
	\caption{Liner shipping network. Image sourced from Tierney et. al. (2014) \protect\cite{Tierney2014} }
	\label{f-LSFRP}
\end{figure}

\subsubsection{Empty containers and flexible arcs}

Another way for ships to offset the cost of repositioning is to transport empty containers from ports with an empty equipment surplus to ports with a deficit. The revenue from performing this type of activity is calculated as an approximation of the savings from moving the equipment now, as opposed to at a later date, potentially through a more expensive channel. There are two types of cargo considered in this problem: dry and refrigerated (reefer). We must differentiate between the two types, since when transporting cargo the reefer containers must be plugged into a power outlet, which means that ships will only have a limited reefer capacity. This is not the case when moving empty equipment, however we still make the distinction as the deficit we are supplying may be for a specific container type. We use the term flexible visits to denote ports with empty equipment available, but no actual cargo demands. These flexible visits are travelled to via flexible arcs. 

There are also various restrictions placed on the cargo carried by repositioning ships such as trade zones. Trade zones are countries or groups of countries with trade agreements. Often cargo cannot flow between trade zones without violating these agreements. To avoid the movement of cargo violating these trade zone restrictions, the law, or a customer contract, repositioning ships are disallowed from crossing into other trade zones while carrying cargo. A similar restriction is known as a cabotage restriction, which prevents international ships from performing domestic cargo services \cite{Tierney2014}. These are all aspects which need to be considered when modelling the LSFRP. Most of these restrictions have been incorporated directly into the network of potential ship paths, so they will not be represented in the MIP formulation.

For the original model formulation, we refer the reader to the paper by Tierney et. al. (2014) \cite{Tierney2014}. We have maintained consistency in notation from previous studies. We now present a reduced formulation of the LSFRP.

\subsection{Reduced MIP}\label{SubSecRedMIP}
Starting with the MIP model described in Tierney et. al. (2014) \cite{Tierney2014}, a reduced version is formulated that does not incorporate flexible arcs or empty equipment. These aspects of the problem are omitted for simplicity to allow us to explore the core structure of the LSFRP without added complexity. By noting the percentage of the public data set that does not include these additional requirements (66\%), it can be seen that the reduced problem is still able to provide much value, as the majority of the instances do not contain the more complex aspects of the problem. There are more reasons why omitting these aspects of the problem are reasonable, some of which will be explored in Section \ref{SecDiscussion}.

\subsection*{Parameters}

\begin{align*}
&S 	&	&\textup{Set of ships.}\\
&V' 	&	&\textup{Set of visits minus the graph sink.}\\
&A' &	&\textup{Set of arcs minus those arcs connecting to the graph sink,} \\ 
&  & & \text{i.e., $(i,j) \in A, i, j \in V'$.}\\
&Q &	&\textup{Set of cargo types; $Q = \{dc,rf\}$.}\\
&M &	&\textup{Set of demand triplets of the form $(o,d,q)$, where $o \in V'$, $d \subseteq V'$, }\\
& & &\textup{and $q \in Q$ are the origin visit, possible destination visits, and the }\\
& & & \textup{cargo type respectively.}\\
&M_i^{\textup{Orig}}, (M_i^{\textup{Dest}}) \subseteq M & &\textup{Set of demands with an origin (destination) visit $i \in V$.}\\
&u_s^q \in \mathbb{R}^+ &	&\textup{Capacity of vessel $s$ for cargo type $q \in Q$.}\\	
&v_s \in V' &	&\textup{Starting visit of ship $s \in S$.}\\
&r^{(o,d,q)} \in \mathbb{R}^+ &		&\textup{Amount of revenue gained per TEU of loaded containers carried} \\
& & &\textup{for the demand triplet.}\\
&c_{sij}^{\textup{Sail}} \in \mathbb{R}^+ &	&\textup{Fixed cost of vessel $s$ utilizing are $(i,j) \in A'$.}\\
&c_{i}^{\textup{Mv}} \in \mathbb{R}^+ &	&\textup{Cost to move a single TEU on or off a ship at visit $i \in V'$.}\\
&c_{si}^{\textup{Port}} \in \mathbb{R} &	&\textup{Port fee associated with vessel $s$ at visit $i \in V'$.}\\
&a^{(o,d,q)} \in \mathbb{R}^+ &	&\textup{Amount of demand available for the demand triplet.}\\
&In(i) \subseteq V' &	&\textup{Set of visits with an arc connecting to visit $i \in V$.}\\
&Out(i) \subseteq V' &	&\textup{Set of visits receiving an arc from visit $i \in V$.}\\ 
&\tau \in V &	&\textup{Graph sink, which is not an actual visit.}
\end{align*}

\subsection*{Variables}

\begin{equation*}
\begin{aligned}
&x_{ij}^{(o,d,q)} \in \mathbb{R}^+_0 	&	&\textup{Amount of flow of demand triplet $(o,d,q) \in M$ on $(i,j) \in A'.$}\\
&y^s_{ij} \in \{0,1\} &	&\textup{Indication of whether vessel $s$ is sailing on arc $(i,j) \in A$.}\\
\end{aligned}
\end{equation*}

\subsection*{Objective and Constraints}

\begin{eqnarray}
\max &&\Bigg \{ \sum_{(o,d,q) \in M} \left( \sum_{j \in d} \sum_{i \in In(j)} (r^{(o,d,q)} - c_o^{Mv} - c_j^{Mv})x_{ij}^{(o,d,q)}\right )\label{RMIP-OBJ1}\\
&&- \sum_{s \in S}\sum_{(i,j) \in A'} c_{sij}^{\textup{Sail}} y^s_{ij} - \sum_{j \in V'} \sum_{i \in In(j)} \sum_{s \in S} c_{sj}^{\textup{Port}} y^s_{ij}\Bigg \}\label{RMIP-OBJ2}\\
\text{s.t.} &&\sum_{s \in S} \sum_{i \in In(j)} y_{ij}^s \leq 1, \quad \forall j \in V';\label{RMIP-C1}\\
&& \sum_{j \in Out(i)} y_{ij}^s = 1, \quad \forall s \in S, i=v_s;\label{RMIP-C2}\\
&&\sum_{i \in In(\tau)} \sum_{s \in S} y^s_{i \tau} = |S|;\label{RMIP-C3}\\
&&\sum_{i \in In(j)} y_{ij}^s - \sum_{i \in Out(j)} y^s_{ij} = 0, \quad \forall j\in V' \backslash \bigcup_{s \in S} v_s, s\in S;\label{RMIP-C4}\\
&&\sum_{(o,d, rf) \in M} x_{ij}^{(o,d,rf)} \leq \sum_{s \in S} u_s^{rf} y_{ij}^s, \quad \forall (i,j) \in A';\label{RMIP-C5}\\
&&\sum_{(o,d, q) \in M} x_{ij}^{(o,d,q)} \leq \sum_{s \in S} u_s^{dc} y_{ij}^s, \quad \forall (i,j) \in A';\label{RMIP-C6}\\
&&\sum_{i \in Out(o)} x_{oi}^{(o,d,q)} \leq a^{(o,d,q)}\sum_{i \in Out(o)} \sum_{s \in S} y^s_{oi}, \quad \forall (o,d,q) \in M;\label{RMIP-C7}\\
&&\sum_{i \in In(j)} x_{ij}^{(o,d,q)} - \sum_{k \in Out(j)} x_{jk}^{(o,d,q)} = 0, \quad \forall (o,d,q) \in M, j \in V' \backslash (o \cup d);\label{RMIP-C8}
\end{eqnarray}

The objective function maximises the profit of the shipping company. The first line (\ref{RMIP-OBJ1}) calculates the profit from delivering the cargo by adding the revenue minus the cost to transport the cargo on and off the ship. This is multiplied by the amount of cargo carried. The second line of the objective function (\ref{RMIP-OBJ2}) subtracts the sum of the sailing costs and the port fees for each port visited by each ship. 

Constraint (\ref{RMIP-C1}) ensures that only one ship visits each port, while (\ref{RMIP-C2}-\ref{RMIP-C4}) conserve the flow of each ship from its starting port to the sink node. If a ship uses an arc, that arc is assigned a reefer capacity in (\ref{RMIP-C5}) and a total capacity in (\ref{RMIP-C6}). Constraint (\ref{RMIP-C7}) ensures that cargo can only flow along an arc if it is on a ship. Constraint (\ref{RMIP-C8}) conserves the flow of cargo from its source node to its destination by ensuring that if it enters an intermediate node, it must also exit that node.

The reduced MIP only uses the $x_{ij}^{(o,d,q)}$ and $y_{ij}^s$ variables, as the others pertain to flexible arcs, empty equipment or entrance/exit times. As such, constraints and terms referring to the other variables are omitted from the reduced formulation. With the use of the Gurobi solver package \cite{Gurobi2015} this model was implemented and solved using a subset of the public data instances (those which did not contain flexible arcs). Pre-processing ensures that only the $x_{ij}^{(o,d,q)}$ variables which can be non-zero are added to the model. That is, there exists a path from the origin to one of the demand points passing through arc (i,j).

\section{Improved solution techniques for LSFRP}\label{SecImprovedTech}
We have made a number of improvements to the solution of the LSFRP, which we describe in this section. The reduced MIP is our starting point, which eliminates some complexity while preserving the core components of the problem. We then tighten some constraints and reformulate the model using disaggregation, which increases the number of variables we solve for, but results in a tighter bound. Next, we apply column generation, which allows us to solve all previously unsolved instances within a large time window. Finally, we apply lazy constraints, which greatly simplifies the problem, to the point where all instances are solved to optimality within four minutes.

\subsection{Tighter Bound and revised formulation}\label{SubSecTightBound}
While the reduced MIP is able to replicate the results shown in Tierney et. al. (2014) \cite{Tierney2014}, it is still unable to solve the last seven instances in the public data set. We note that one of the reasons the MIP struggles on larger problems is because the linear relaxation of the problem generates solutions in which fractional ship variables are used to transport all of a demand triplet. In order to prevent this an additional set of constraints is added:

\begin{equation*}
x_{ij}^{(o,d,q)} \leq a^{(o,d,q)} \sum_{s \in S} y_{ij}^s \quad \forall (i,j) \in A', (o,d,q) \in M;
\tag{\ref{RMIP-C7}a} \label{TB1}
\end{equation*}

These constraints prevent ships from moving a greater fraction of the demand triplet than the fraction of the ship used. This is a disaggregated version of constraint (\ref{RMIP-C7}) from the reduced formulation, as it is no longer summed over $i \in Out(o)$. This is allowed since only one ship can visit any node, and thus only one arc leaving each node will have a non-zero value of $y_{ij}^s$ in any integer solution. By the properties of disaggregation this must give a tighter bound for the linear relaxation. This improved bound yielded strong improvement on some larger instances, however it is still unable to solve five instances to optimality within the timeout limit.

Despite these tighter constraints, fractional parts of demand triplets can still be shipped, as the new constraints apply to all the ships rather than individual ships (the RHS is summed over s). To combat this, we reformulate the model for individual ships by adding a ship index to the $x$ variables. As stated earlier, an important aspect of the problem to note is that the paths need to be node distinct, meaning that only one ship can visit each node. This property means that no product can be transshipped, and allows the ship index to be added to the $x$ variables. The revised formulation with $x_{ij}^{s,(o,d,q)}$ variables is shown below.

\subsubsection*{Variables}
\begin{equation*}
\begin{aligned}
&x_{ij}^{s,(o,d,q)} \in \mathbb{R}^+_0 	&	&\textup{Amount of flow of demand triplet $(o,d,q) \in M$ on $(i,j) \in A'$ on $s \in S.$}\\
&y^s_{ij} \in \{0,1\} &	&\textup{Indication of whether vessel $s$ is sailing on arc $(i,j) \in A$.}
\end{aligned}
\end{equation*}

\subsubsection*{Objective and Constraints}
\begin{eqnarray}
\max &&\Bigg \{ \sum_{(o,d,q) \in M} \sum_{s \in S} \sum_{j \in d} \sum_{i \in In(j)} (r^{(o,d,q)} - c_o^{Mv} - c_j^{Mv})x_{ij}^{s,(o,d,q)}\label{RFObj1}\\
&&- \sum_{s \in S}  \sum_{(i,j) \in A'} c_{sij}^{\textup{Sail}} y^s_{ij} - \sum_{j \in V'} \sum_{i \in In(j)} \sum_{s \in S} c_{sj}^{\textup{Port}} y^s_{ij}\Bigg \}\label{RFObj2}\\
\text{s.t.} &&\sum_{s \in S} \sum_{i \in In(j)} y_{ij}^s \leq 1, \quad \forall j \in V';\label{RFC1}\\
&& \sum_{j \in Out(i)} y_{ij}^s = 1, \quad \forall s \in S, i=v_s;\label{RFC2}\\
&&\sum_{i \in In(\tau)} \sum_{s \in S} y^s_{i \tau} = |S|;\label{RFC3}\\
&&\sum_{i \in In(j)} y_{ij}^s - \sum_{i \in Out(j)} y^s_{ji} = 0, \quad \forall j\in V' \backslash \bigcup_{s \in S} v_s, s\in S;\label{RFC4}\\
&&\sum_{(o,d, rf) \in M} x_{ij}^{s,(o,d,rf)} \leq u_s^{rf} y_{ij}^s, \quad \forall (i,j) \in A', s \in S;\label{RFC5}\\
&&\sum_{(o,d, q) \in M} x_{ij}^{s,(o,d,q)} \leq  u_s^{dc} y_{ij}^s, \quad \forall (i,j) \in A', s \in S;\label{RFC6}\\
&&\sum_{i \in Out(o)} x_{oi}^{s,(o,d,q)} \leq a^{(o,d,q)}\sum_{i \in Out(o)} y^s_{oi}, \quad \forall (o,d,q) \in M, s \in S;\label{RFC7}\\
&& \sum_{i \in In(j)} x_{ij}^{s,(o,d,q)} - \sum_{k \in Out(j)} x_{jk}^{s,(o,d,q)} = 0,\quad \forall (o,d,q) \in M, j \in V' \backslash (o \cup d), s \in S;\label{RFC8}\\
&& x_{ij}^{s,(o,d,q)} \leq y_{ij}^s \min(a^{(o,d,q)}, u_s^q), \quad \forall (i,j) \in A', s \in S, (o,d,q) \in M;\label{RFC9}
\end{eqnarray}

The objective value is unchanged from the reduced MIP, except that now the $x$ variables are also summed over all ships $s\in S$. Constraints (\ref{RFC1}-\ref{RFC4}) are identical to the original formulation, and constraints (\ref{RFC5}-\ref{RFC8}) are disaggregated versions of constraints (\ref{RMIP-C5}-\ref{RMIP-C8}), so there is now one constraint for each ship. Finally, constraint (\ref{RFC9}) is a disaggregated version of constraint (\ref{TB1}), which ensures that for each ship, and on each arc, no more cargo can be transported than is either available or able to be transported on the ship.

While this formulation does introduce more variables into the problem, it also provides a linear relaxation with a tighter bound, which allows it to solve much faster for larger instances. The results of this new MIP formulation are reported in Section \ref{SubSecRevForm}.

\subsection{Column Generation}\label{SubSecColGen}
One of the main reasons that larger instances are unsolvable when using the presented MIP formulations is because the problems are too large to consider all variables explicitly and still be solved within a reasonable time frame. The last three instances are so large that the LP relaxation can not be solved using any of the MIP formulations within one hour. To combat this issue, composite variables and the technique of column generation are employed to reduce the number of variables considered at one time \cite{Barnhart1998}.

First the MIP is decomposed into a master problem and sub-problems (one for each ship). The master problem is an IP with every possible composite variable, which represent paths of ships through the network. This is then transformed into the reduced master problem (RMP), which contains a reduced set of the composite variables. We begin by solving the LP relaxation of the RMP, which is an LP that contains two constraints. The first says each ship is used exactly once, and the second says each node is used at most once. The variables for the master problem are paths through the graph for certain ships. The sub-problems are equivalent to the revised MIP on a ship-by-ship basis, with the dual variables associated with the master problem in the objective function. Since the path of each ship is decided by the master problem, the constraints on each node being visited only once are removed from the sub-problems. Since there is one sub-problem per ship, we do not need to sum over the ships. The RMP is described as:

\paragraph*{Parameters}

\begin{align*}
&P 	&	&\textup{Set of paths.}\\
&C_{sp} \in \mathbb{R}^+&	&\textup{Profit of vessel $s$ sailing on path $p$ (revenue from moving product} \\
&&&\textup{less the cost of the path).}\\
&\delta_{isp} \in \{0, 1\}&	&\textup{1 if vessel $s$ sailing on path $p$ goes through node $i \in V'$.}
\end{align*}

\paragraph*{Variables}
\begin{equation*}
\begin{aligned}
&Z_{sp} \in \{0, 1\}		&& \textup{1 if vessel $s$ sails on path $p \in P$, 0 otherwise.}
\end{aligned}
\end{equation*}

\paragraph*{Objective and Constraints}
\begin{eqnarray}
\max && \sum_{p \in P} \sum_{s \in S} C_{sp} Z_{sp} \\
\text{s.t.} && \sum_{p \in P} Z_{sp} = 1, \quad \forall s \in S\\
&& \sum_{p \in P} \sum_{s \in S} \delta_{isp}Z_{sp} \leq 1, \quad \forall i \in V'
\end{eqnarray}

After setting up the models, an initial solution to the master problem is generated. This is achieved using a modified version of the sub-problems. First the number of possible paths through the graph for each ship is calculated, then the ships are ordered from those with the least to those with the most paths. The sub-problems are solved in this order using the objective function from the revised formulation, with $s$ as a constant. After each subproblem is solved, the nodes used in the solution are excluded from subsequent sub-problems. This heuristic procedure resulted in a feasible starting solution for each instance, though our code does allow for high cost dummy paths direct from source to sink if this procedure fails for any ship. The paths generated from these sub-problems are then added to the master problem as columns. The master problem is then solved using these columns. 

In the main loop of the algorithm, the sub-problems are solved in the reverse order than before, and if a new column is found, it is added to the master problem, which is then re-solved. If all sub-problems are solved without any new column being added to the master problem, the algorithm terminates and the optimal solution has been found. Due to specific properties of this problem, the column generation master problem gives integer optimal solutions, so a branch and bound algorithm is not required. The reasons for this are explored in Section \ref{SubSecIntegerProof}.

\subsection{Lazy Constraints}\label{SubSecLazy}
The final improvement we present for the solution of the LSFRP involves using lazy constraints to dramatically reduce the size of the column generation sub-problems. Each node can be visited by at most one ship, the ship movement graph is acyclic, and no transshipment can occur. Therefore, if any two demands are ever carried together at any time, then they will always be carried together. This is because each demand has a single port of origin, and that port cannot be visited twice.

Consider a ship that visits port A and loads some demand from that port. The ship then proceeds to port B where it loads another demand from this port. Finally, the ship then proceeds to port C. Because the demands from ports A and B were carried together, the sum of demands from A and B cannot exceed the capacity of the ship on this arc. If demand B is unloaded at port C, it will not be possible to return to port A to load more demand of type A, and vice versa.

This means that rather than solving for the flow along each arc, we can instead solve for the total flow of each demand, and add extra constraints enforcing shared capacity where it is exceeded. We define a new set $\bar{M}_s$, which is the set of all demands $(o,d,q)$ which can be moved by ship $s$. By extension, $\bar{V}^\textup{Orig}_{sq}$ is the set of all nodes from which ship $s$ can pick up a demand of type $q$ from $\bar{M}_s$, that is:

\begin{equation*}
\bar{V}^\textup{Orig}_{sq} = \{o | (o,d,q) \in \bar{M}_s\}
\end{equation*}

Finally, we denote the set of all arcs $(i,j) \in A'$ across which a demand triple $(o,d,q)\in M$ can possibly travel as $A^{(o,d,q)}$. We also use the set $M_i^{\text{Orig}}$ from the original formulation, which is the set of demands with an origin visit $i\in V$. Starting with the revised formulation from Section \ref{SubSecTightBound}, we modify the $x$ variables so that they are no longer indexed by arc. That is:

\begin{equation*}
\begin{aligned}
&x_{s}^{(o,d,q)} \in \mathbb{R}^+_0 	&	&\textup{Amount of flow of demand triplet $(o,d,q) \in M$ on ship $s \in S.$}
\end{aligned}
\end{equation*}
We remove constraints (\ref{RFC5}-\ref{RFC9}) and replace them with the following:

\begin{eqnarray}
&&\sum_{(k,d,q) \in M_k^\textup{Orig}} x_{s}^{(k,d,q)} \leq \sum_{\substack{j\in Out(k)\\(k,j)\in A'}} u_s^{dc} y_{kj}^s, \quad \forall k \in \bigcup_{q \in Q}\bar{V}^\textup{Orig}_{sq},  s \in S;\label{LC1}\\
&&\sum_{(k,d,rf) \in M_k^\textup{Orig}} x_{s}^{(k,d,rf)} \leq \sum_{\substack{j\in Out(k)\\(k,j)\in A'}} u_s^{rf} y_{kj}^s, \quad \forall k \in \bar{V}^\textup{Orig}_{s,rf}, s \in S; \label{LC2}\\
&&x_{s}^{(o,d,q)} \leq \textup{min}\left(a^{(o,d,q)},u_s^{q}\right)\sum_{\substack{j \in Out(o) \\ (o,j)\in A^{(o,d,q)}}}y_{oj}^s \quad \forall (o,d,q) \in M, s \in S;\label{LC3} \\ 
&&x_{s}^{(o,d,q)} \leq \textup{min}\left(a^{(o,d,q)},u_s^{q}\right)\sum_{j\in d}\sum_{\substack{i \in In(j)\\ (i,j)\in A^{(o,d,q)}}} y_{ij}^s \quad \forall (o,d,q) \in M, s \in S;\label{LC4}
\end{eqnarray}

Constraints (\ref{LC1}-\ref{LC2}) ensure that the sum of demands loaded at any node cannot exceed the capacity of the ship, and specifically for reefer cargo. Constraints (\ref{LC3}-\ref{LC4}) ensure that a demand can only be carried if the ship passes through the demand's origin and one of its destinations, and caps the flow of each demand by the minimum of the demand's availability and the ship's capacity for the specific type. These constraints are the bare-basic constraints that ensure that cargo can only be carried if the ship visits the origin and a destination of the demand, and empty ships will not load more than their capacity. However, if a ship is already carrying other demands, the capacity of the ship might be breached without violating any of these constraints. This is where we add lazy constraints.

While solving a sub-problem, candidate integer solutions will be found by the MIP solver at nodes in the branch-and-bound tree. When a candidate solution is found, we check to see if the capacity of the ship is violated at any stage, specifically at any location where additional demands are loaded. We follow the path of the ship through the network, keeping track of all demands that are currently on the ship (i.e. those that have been loaded at their origin but not yet unloaded at a destination), and if, at a particular node $i$, the sum of the demands currently on the ship exceeds the capacity of the ship (for any type), we then add a constraint of the form:

\noindent\begin{minipage}{\textwidth}
	\begin{minipage}[c][5cm][c]{\dimexpr0.5\textwidth-0.5\Colsep\relax}
		\begin{equation}
		\sum_{\substack{(o,d,q)\in M \\ j \in Out(i) \\ (i,j)\in A^{(o,d,q)}}} x_s^{(o,d,q)} \leq u_s^{dc} \label{LCC1}
		\end{equation}
	\end{minipage}\hfill
	\begin{minipage}[c][5cm][c]{\dimexpr0.5\textwidth-0.5\Colsep\relax}
		\begin{equation}
		\sum_{\substack{(o,d,rf)\in M\\ j \in Out(i) \\ (i,j)\in A^{(o,d,rf)}}} x_s^{(o,d,rf)} \leq u_s^{rf} \label{LCC2}
		\end{equation}
	\end{minipage}%
\end{minipage}

that is, the sum of the flow of all demands that can pass through node $i$ must be less than the capacity of the ship, and also specifically for reefer demands. After a sub-problem has been solved, we take the lazy constraints that were generated and add them to the sub-problem as regular constraints for the next iteration. By implementing these constraints, the model is now significantly smaller and easier to solve, as can be found in Section \ref{SubSecModelSize}. The results of running this implementation can be found in Section \ref{SubSecLazyRes}.

\subsubsection{Splitting demand triples}\label{SubSubSecSplitDemands}
While it's true for a specific path through the network that if two demands are carried together at any time, then they will be carried together until one is unloaded, that does not mean that the two demands must be carried together in the first place. This can occur if one of the demands has more than one possible destination. Consider the example in Figure \ref{PairDemands}, where demand A will be picked up from Origin A. There are two choices: either proceed directly to Origin B, still carrying cargo A, and pick up cargo B, or proceed to destination A1, unload cargo A and continue to Origin B. In the first case, a constraint limiting the combined capacity of demands A and B may be imposed, where it is unnecessary if the second option is chosen. 

\begin{figure}[b]
	\centering
	\includegraphics[width=0.75\textwidth]{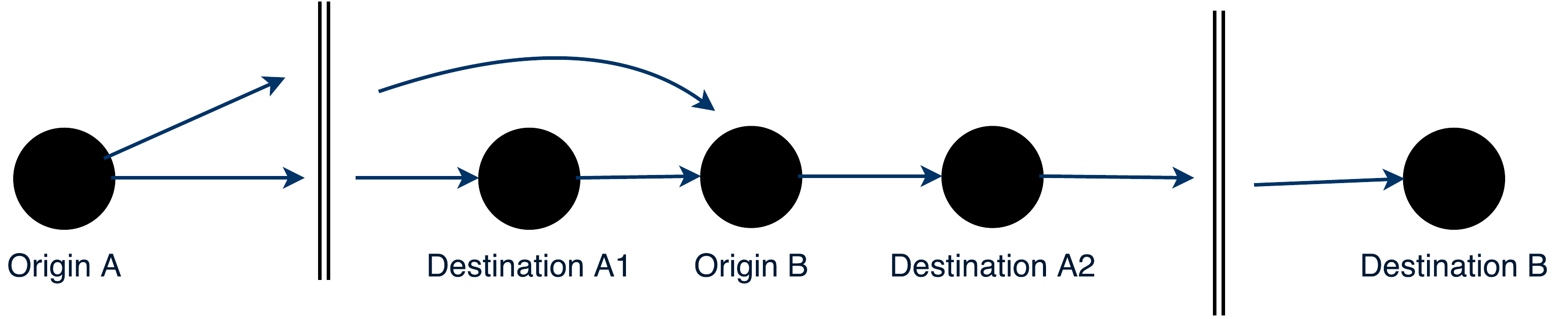}	\label{PairDemands}
	\caption{A scenario where the variables for a particular demand triple need to be separated.}
\end{figure}

This means that the variables for demand A must be separated, which involves looking for any demand triples $(o,d,q)$ that fit the following criteria:

\begin{itemize}
	\item There are two destinations $d_1, d_2 \in d$ such that $d_2$ is reachable from $d_1$.
	\item There exists another demand $m^*$ with origin $o^*$ such that $o^*$ is reachable from $d_1$, and $d_2$ is reachable from $o^*$.
	\item There exists a destination $d^*$ of demand $m^*$ such that $d^*$ is reachable from $d_2$.
	\item There exists a path between the origins $o$ and $o^*$ that does not pass through $d_1$.
\end{itemize}

If such conditions are met, then the variables for $x_{s}^{(o,d,q)}$ are split up into $x_{s}^{(o,d_i,q)}$ $\forall d_i \in d$, and an additional constraint is added:

\begin{equation}
\sum_{d_i \in d} x_{s}^{(o,d_i,q)} \leq a^{(o,d,q)}
\end{equation} 

This ensures that we will not add any lazy constraints to the problem which will unnecessarily over-constrain the problem. Each demand triple has an associated revenue and amount, both of which will be inherited by the split variables.

\subsubsection{Modification to the objective function}\label{SubSubSecLazyObj}
Because we are no longer explicitly calculating which arcs the cargo travels along, we cannot easily determine which destination it is being delivered to. Our implementation assumes that cargo is unloaded at the earliest possible time, that is, if the node $i$ is visited, all demands $(o,d,q)$ where $i\in d$ will be unloaded. In the public data set, each destination in a demand triple is the same physical port, the difference is the delivery time. This means that the unloading cost of each demand triple is the same for all destinations. This allows us to take the unload cost of any destination from the demand triple. If the destinations had different unload costs, we would have to split the demand triple as discussed in Section \ref{SubSubSecSplitDemands}.

\section{Results}\label{SecResults}
Here we consider the results from Tierney et. al. (2014) \cite{Tierney2014} and Tyler (2015) \cite{Tyler2015}, and compare them to our results. We use the same code as Tyler, however since their results were calculated, Gurobi \cite{Gurobi2015} released version 6.5 which led to significant improvements to the solve time of large MIPs. We are using a similar computer to Tyler for the reduced MIP, and report their comparison to Tierney et. al., as well as our comparison to Tyler. The machine on which the optimisation was run uses Windows 8.1 Enterprise, Python 2.7.10 and Gurobi 6.5, with an Intel i7-3770 (3.40 GHz) running 8 threads with 8 GB of RAM. All software involved is 64-bit.

Tierney et. al. tested this formulation on a public data set of 44 instances of increasing complexity. The number of ships ranged from 3 to 11, with between 30 and 379 ports and 94 to 11979 arcs. With a time limit of 1 hour and a memory limit of 10GB, the MIP implemented in CPLEX 12.4 was able to solve the first 33 instances to optimality.

\subsection{Comparison of reduced model with original}\label{SubSecOrigComp}
Table \ref{t-replicate} shows the results of the reduced MIP compared with the results reported in the paper by \citeauthor{Tierney2014}. Instances are left to run for 1 hour, and if no solution is found within that time they were said to have timed out. Instances that run out of memory are denoted by `Mem'. In the table $|M|$ represents the number of demands, $|V|$ the number of ports, $|A|$ the number of arcs and $|S|$ the number of ships. 
\begin{table}
	\centering
	\caption{Results of reduced MIP run on public data instances from Tierney et. al. (2014) \protect\cite{Tierney2014}  \label{t-replicate}}
	\scalebox{0.75}{
		\begin{tabular}{|c|*{7}{c|}}
			\hline
			Instance& $|S|$ & $|V|$ & $|A|$ &  $|M|$  & Tierney MIP & Tyler MIP & Reduced MIP \\ 
			ID & & & &  &time (s) & time (s) & time (s) \\ \hline
			repos1p & 3 &36 &150&28&0.06 & 0.07 & 0.23   \\ \hline
			repos2p & 3 &36 &150&28&0.06 & 0.07 & 0.38  \\ \hline
			repos3p & 3 &38 &151&24&0.04 & 0.06 & 0.25 \\ \hline
			repos4p & 3 &42 &185&20&0.04 & 0.07 & 0.30 \\ \hline
			repos5p & 3 &51 &270&22&0.07 & 0.11 & 0.29 \\ \hline
			repos6p & 3 & 51&270&22&0.08 & 0.11 & 0.27 \\ \hline
			repos7p & 3 & 54&196&46&0.08 & 0.09 & 0.26 \\ \hline
			repos10p & 4 & 58&499&125&74.85 & 7.25 & 4.26 \\ \hline
			repos12p & 4 & 74&603&145&106.63 & 14.94 & 11.94 \\ \hline
			repos13p & 4 & 80&632&155&99.81 & 16.53 & 10.66 \\ \hline
			repos15p & 5 & 71&355&173&0.47 & 0.29 & 0.40  \\ \hline
			repos16p & 5 & 106&420&320&1.08 & 0.42 & 0.52  \\ \hline
			repos17p & 6 & 102&1198&75&4.64 & 1.55 & 1.52 \\ \hline
			repos18p & 6 & 135&1439&87&6.79 & 1.25 & 1.25   \\ \hline
			repos20p & 6 & 142&1865&80&13.84 & 3.34 & 2.86 \\ \hline
			repos24p & 7 & 75&482&154&2.23 & 0.47& 0.58  \\ \hline
			repos25p & 7 & 77&496&156&3.19 & 0.57 & 0.73  \\ \hline
			repos27p & 7 & 79&571&188&1394.44 & 4.43& 1.79   \\ \hline
			repos28p & 7 & 90&618&189&1099.87 & 6.92 & 1.91 \\ \hline
			repos30p & 8 & 126&1450&265&307.12 & 8.15 & 4.59 \\ \hline
			repos31p & 8 & 130&1362&152&57.4 & 19.02 & 9.82  \\ \hline
			repos32p & 8 & 144&1501&170&65.51 & 10.38 & 6.96  \\ \hline
			repos34p & 9 & 304&10577&344&Time & Time & Time   \\ \hline
			repos36p & 9 & 364&11972&1048&Mem  & Time & Time \\ \hline
			repos39p & 9 & 379&11666&1109&Mem  & Time & Time   \\ \hline
			repos41p & 10 & 249&8051&375&Time & Time & Time   \\ \hline
			repos42p & 11 & 279&6596&1423&Time & Time & Time   \\ \hline
			repos43p & 11 & 320&13058&1013&Mem  & Time & Time   \\ \hline
			repos44p & 11 & 328&13705&1108&Mem  & Time & Time  \\ \hline
		\end{tabular}
	}
	\label{MIP}
\end{table}

As can be seen in Table \ref{t-replicate} the reduced MIP formulation was able to replicate the results from Tierney et. al. for all instances. The reduced MIP appears to run much faster than that used by Tierney et. al. (2014) \cite{Tierney2014}, however this is believed to be caused by an updated version of Gurobi \cite{Gurobi2015} and differences in computing power, rather than differences in the model itself. Since only the smaller models are solved within the time limit, there is not much difference between the run times reported in Tyler (2015) \cite{Tyler2015} and our run times. The times achieved by our reduced MIP will serve as a baseline for comparison with the other models we present.

\subsection{Revised Formulation with tighter bound}\label{SubSecRevForm}
As can be seen in Table \ref{t-lazycon}, the revised formulation with the tighter bound enables the MIP to solve six more instances to optimality. All solved instances are solved within 10 minutes using this formulation, which does not appear to have a significant effect on the run time of the model for smaller instances. The main benefit is that all but three of the hardest instances are solved to optimality, something which has not been achieved before. Therefore the revised formulation is a significant improvement over previous formulations for the larger instances of the LSFRP.

\subsection{Column Generation}\label{SubSecCGRes}
Table \ref{t-lazycon} shows the results of using column generation implemented in Python using Gurobi. Tyler's results using column generation report the approach is able to find optimal solutions to all instances in under 7.5 hrs \cite{Tyler2015}. The update to Gurobi 6.5 results in all instances being solved to optimality in under 2.7 hours. With the exception of repos43p and repos44p, all instances are solved within the timeout limit, with the slowest taking approximately 20 minutes, and the majority solving within minutes or seconds. For the larger instances (repos34p-repos44p), column generation is significantly faster than the other approaches (excluding repos41p which is slightly slower than the revised MIP). In the best case (repos34p) the column generation solves the problem almost three times faster than the revised formulation.

For the two instances that remain unsolved after the timeout limit, but can be solved within 8 hours, Tyler performs a comparison test with the revised MIP formulation \cite{Tyler2015}. This MIP is allowed to run on these instances with a new timeout limit of 8 hours, in order to determine whether it is able to solve these problems in a comparable time frame. However, after 8 hours both problems are still in the preprocessing stage of Gurobi's solver, and have not obtained a solution or an optimality gap. Therefore it can be concluded that for these large instances the only feasible technique for solving them is column generation. 

\subsection{Lazy constraints}\label{SubSecLazyRes}
All instances are solved to optimality using the lazy constraints formulation, and the optimal objective values all agreed with past results. The only comparison we are interested in is the difference in run-time between the different formulations. Table \ref{t-lazycon} is a comparison between all methods presented in this paper.
\begin{table}
	\centering
	\caption{Comparison of solution times for different implementations \label{t-lazycon}}
	\label{TableLabel}
	\scalebox{0.75}{
		\begin{tabular*}{\textwidth}{@{\extracolsep{\fill}}|c|c|c|c|c|}
			\hline Instance& Reduced MIP &Revised MIP &Column Generation& Lazy Constraints\\ 
			ID &time (s) &time (s)& time (s)& time (s) \\ \hline
			repos1p & 0.23 & 0.25 & 1.00 & 0.44\\ \hline 
			repos2p & 0.38 & 0.28 & 0.53 & 0.50\\ \hline 
			repos3p & 0.25 & 0.25 & 0.30 & 0.33\\ \hline 
			repos4p & 0.30 & 0.34 & 0.78 & 0.75\\ \hline 
			repos5p & 0.29 & 0.25 & 0.33 & 0.44\\ \hline 
			repos6p & 0.27 & 0.30 & 0.34 & 0.34\\ \hline 
			repos7p & 0.26 & 0.26 & 0.66 & 0.44\\ \hline  
			repos10p & 4.26 & 1.37 & 3.06 & 1.02\\ \hline 
			repos12p & 11.94 & 1.88 & 4.58 & 1.47\\ \hline 
			repos13p & 10.66 & 2.02 & 3.75 & 1.65\\ \hline 
			repos14p & 10.34 & 2.07 & 3.61 & 1.64\\ \hline 
			repos15p & 0.40 & 0.77 & 1.76 & 1.64\\ \hline 
			repos16p & 0.52 & 1.03 & 2.27 & 1.42\\ \hline 
			repos17p & 1.52 & 1.85 & 3.06 & 1.80\\ \hline 
			repos18p & 1.25 & 2.50 & 3.36 & 2.25\\ \hline 
			repos19p & 1.18 & 2.54 & 3.32 & 2.36\\ \hline 
			repos20p & 2.86 & 3.07 & 5.69 & 2.38\\ \hline 
			repos21p & 3.14 & 3.13 & 5.64 & 2.70\\ \hline 
			repos22p & 3.33 & 3.13 & 6.20 & 2.86\\ \hline  
			repos24p & 0.58 & 1.23 & 2.19 & 1.47\\ \hline 
			repos25p & 0.73 & 1.35 & 1.80 & 1.69\\ \hline 
			repos26p & 0.63 & 1.34 & 2.10 & 1.38\\ \hline 
			repos27p & 1.79 & 2.63 & 3.59 & 1.91\\ \hline 
			repos28p & 1.91 & 2.73 & 3.38 & 2.13\\ \hline 
			repos29p & 2.34 & 2.85 & 3.52 & 1.70\\ \hline 
			repos30p & 4.59 & 3.09 & 5.83 & 2.78\\ \hline 
			repos31p & 9.82 & 6.95 & 12.19 & 6.88\\ \hline 
			repos32p & 6.96 & 3.89 & 6.42 & 3.19\\ \hline 
			repos34p & Time & 251.01 & 91.51 & 19.04\\ \hline 
			repos36p & Time & 493.70 & 228.88 & 38.25\\ \hline 
			repos37p & Time & 447.98 & 263.02 & 33.63\\ \hline 
			repos39p & Time & 539.08 & 374.75 & 35.12\\ \hline 
			repos40p & Time & 541.28 & 370.84 & 29.06\\ \hline 
			repos41p & Time & 88.84 & 113.24 & 17.00\\ \hline 
			repos42p & Time & Time & 1271.69 & 57.41\\ \hline 
			repos43p & Time & Time & Time (9632.16) & 223.21\\ \hline 
			repos44p & Time & Time & Time (8924.85) & 222.63\\ \hline 
		\end{tabular*}
	}
\end{table}

\begin{table}
	\centering
	\caption{Number of ships affected by lazy constraints, and number of constraints added for each instance.}\label{t-LazyAdded}
	\begin{tabularx}{0.8\textwidth}{|c|Y|Y|Y|Y|}
		\hline  & \multicolumn{2}{|c|}{Number of Ships} & \multicolumn{2}{|c|}{Number of Constraints}  \\ 
		\hline Instance & Dry & Reef & Dry & Reef \\ 
		\hline repos10p-repos14p & 1 & 0 & 4 & 0 \\ 
		repos15p-repos16p & 0 & 3 & 0 & 7 \\ 
		repos36p & 0 & 1 & 0 & 1 \\ 
		repos39p & 0 & 1 & 0 & 2 \\
		repos42p & 3 & 10 & 8 & 21 \\
		repos43p & 2 & 10 & 4 & 22 \\
		repos44p & 2 & 10 & 5 & 27 \\ 
		\hline 
	\end{tabularx} 
	
\end{table}

Because of the added work setting up column generation, the smaller instances for the lazy constraints method all take longer to run compared to the reduced MIP. The real strength of this formulation is that it scales far better than any other formulation presented previously. Where instances repos43p and repos44p take hours to solve with column generation, using lazy constraints means the problems can be solved within four minutes. Since these larger instances are more likely to be useful in real-world situations, this formulation is by far the best for solving the LSFRP.

Another interesting result is the number of lazy constraints added during the solve process. For almost three in four instances, no lazy constraints are added at all. This means that the small formulation without flow variables for every arc is sufficient to solve the problem. The instances which most benefited from the lazy constraints formulation (repos42p-repos44p) only need at most 30 constraints for any cargo type to solve the problem to optimality. Compare this to the number of constraints that were removed from the problem, and it becomes clear why lazy constraints are so powerful when applied to this problem. A comparison of model sizes can be found in Section \ref{SubSecModelSize}.

Table \ref{t-LazyAdded} shows, for the instances where lazy constraints are added, how many ships need them and how many constraints are added in total for the different cargo types. For all instances except the three largest, no more than seven additional constraints are required before an optimal solution was found. The reefer cargo is also more likely to have lazy constraints imposed because there are significantly fewer reefer spots on each ship compared to the dry cargo.

\section{Discussion}\label{SecDiscussion}
\subsection{Proof of integer solutions to column generation}\label{SubSecIntegerProof}
One reason column generation is so effective for this problem is because it always generates integer optimal solutions. This means we don't need to use a branch-and-bound algorithm, which makes the whole solution process much faster. After making a few observations about the nature of our problem, we will prove that solutions to the column generation are always integer.

\begin{ThmObs}
	The LSFRP with a single ship type and no product flow can be modelled as a pure network flow problem.
\end{ThmObs}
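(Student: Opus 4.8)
The plan is to start from the reduced MIP of Section \ref{SubSecRedMIP}, specialise it to the stated setting, and then exhibit an explicit auxiliary directed graph on which the specialised model is exactly a single-commodity minimum-cost flow. First I would set the product to zero, which deletes the $x_{ij}^{(o,d,q)}$ variables and constraints (\ref{RMIP-C5})--(\ref{RMIP-C8}) together with the revenue terms in the objective. What remains is a model in the ship variables $y_{ij}^s$ alone, governed by (\ref{RMIP-C1})--(\ref{RMIP-C4}) and minimising $\sum_{s}\sum_{(i,j)} c_{sij}^{\textup{Sail}} y_{ij}^s + \sum_{s}\sum_{j}\sum_{i\in In(j)} c_{sj}^{\textup{Port}} y_{ij}^s$. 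Because there is a single ship type, the cost coefficients $c_{sij}^{\textup{Sail}}$ and $c_{sj}^{\textup{Port}}$ do not depend on $s$; I would exploit this to aggregate the ship flows by setting $y_{ij} = \sum_{s\in S} y_{ij}^s$, so that both the objective and constraints (\ref{RMIP-C1}), (\ref{RMIP-C3}), (\ref{RMIP-C4}) rewrite in terms of the single family $y_{ij}$.

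Next I would build the flow network. To encode the ``at most one visit per node'' restriction (\ref{RMIP-C1}) I would split each visit $j\in V'$ into an in-copy $j^{\textup{in}}$ and an out-copy $j^{\textup{out}}$ joined by an internal arc of capacity one, routing every original arc $(i,j)\in A'$ as $i^{\textup{out}}\to j^{\textup{in}}$; the unit capacity on the internal arc reproduces (\ref{RMIP-C1}) exactly. I would then add a super-source $\sigma$ with an arc of capacity one into $v_s^{\textup{out}}$ for each ship $s$ (bypassing the internal capacity arc, since a ship originating at $v_s$ is not an incoming visit and $v_s$ is excluded from the conservation constraint (\ref{RMIP-C4})), and designate the existing graph sink $\tau$ as the network sink. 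Setting the supply at $\sigma$ and the demand at $\tau$ both equal to $|S|$ makes (\ref{RMIP-C3}) the sink-balance equation and (\ref{RMIP-C4}) ordinary conservation at the remaining nodes, while the super-source arcs supply exactly the one unit each ship must send out in (\ref{RMIP-C2}). Transferring the sailing cost onto each arc and the port fee onto each internal node arc gives a linear arc cost, so the specialised model is precisely a minimum-cost flow of value $|S|$ from $\sigma$ to $\tau$ on this graph.

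I would close by noting that the constraint matrix of the resulting program is the node--arc incidence matrix of a directed graph, which establishes the ``pure network flow'' claim; this is the feature that Section \ref{SubSecIntegerProof} later leverages for integrality. The step I expect to be the main obstacle is the faithful treatment of the starting visits: I must check that exempting each $v_s$ from conservation and feeding it from $\sigma$ reproduces (\ref{RMIP-C2}) without either double-counting a node against (\ref{RMIP-C1}) or forbidding a legitimate pass-through, and that an integral flow of value $|S|$ decomposes, via standard flow decomposition, into $|S|$ node-disjoint $\sigma$--$\tau$ paths that can be reassigned back to the individual ships. The reassignment is sound precisely because, with a single ship type, the ships are interchangeable, so any bijection between the decomposed paths and the ships yields a feasible solution of the original $y^s$-model with the same cost.
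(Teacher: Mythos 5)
Your proposal is correct and takes essentially the same approach as the paper: drop the cargo flow, use the single ship type to make arc costs ship-independent, and split each visit into In/Out copies joined by a unit-capacity internal arc so that the one-visit-per-node constraint becomes an arc capacity, with unit supplies for the ships and a demand of $|S|$ at the sink $\tau$. The only cosmetic difference is that you feed the starting visits from a super-source $\sigma$ via capacity-one arcs, whereas the paper places a supply of one unit directly at each $v_s$; these formulations are equivalent.
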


\begin{figure}
	\centering
	\includegraphics[width=0.8\textwidth]{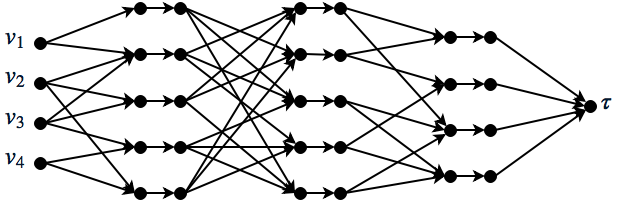}
	\caption{Example of network modification to turn column generation master problem into a pure network flow problem}\label{f-proof}
\end{figure}

``The pure network flow problem can be defined by a given set of arcs and a given set of nodes with known upper bounds and cost parameters for each arc, and fixed external flow for each node'' \cite{Jenson1999b}. Now consider the LSFRP with a single ship type. This implies that the cost of travelling along any arc is the same for all ships. If we ignore the product flow in this problem, then we can consider the problem to have a set of source nodes $v_s$ which each have a supply of 1 unit, a sink node $\tau$ which has a demand equal to the number of ships, and a set of intermediate nodes. These nodes can be split into In and Out nodes, with an arc connecting them which has a capacity of 1. The graphical representation of this can be seen in Figure \ref{f-proof}. We are now solving a pure network flow problem where all constant terms are integer, which will give us the node-distinct paths of minimum cost, along which the ships will travel.

\begin{ThmObs}
	Pure network flow problems have integer values at all of their extreme points
\end{ThmObs}

It is known that, if the constant values of a network flow problem are all integer, then the extreme points will be integer-valued \cite{Jenson1999a,Jenson1999b,Dantzig1963,Nananukul2008}.

\begin{theorem}
	The extreme points of the column generation master problem for a single ship type with or without cargo flows are all integer valued. 
\end{theorem}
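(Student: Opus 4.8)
The plan is to treat the two cases named in the statement separately, since the ``without cargo flows'' case is an immediate corollary of the two preceding observations, while the ``with cargo flows'' case requires only one additional structural remark. In both cases the object of study is the linear relaxation of the reduced master problem, whose feasible region is the polytope cut out by the convexity constraints $\sum_{p \in P} Z_{sp} = 1$ for each $s \in S$, the node-packing constraints $\sum_{p\in P}\sum_{s\in S}\delta_{isp} Z_{sp} \le 1$ for each $i \in V'$, and $Z_{sp} \ge 0$. The goal is to show every extreme point of this polytope is integer.

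First I would dispatch the no-cargo case. By Observation 1 the single-ship-type LSFRP without product flow is a pure network flow problem on the modified graph of Figure \ref{f-proof}, in which each intermediate visit is split into an In and an Out node joined by a capacity-one arc, each source $v_s$ supplies one unit, and the sink $\tau$ absorbs $|S|$ units. I would then exhibit the precise correspondence between this flow problem and the relaxed master problem: a basic feasible flow decomposes into node-distinct unit-flow paths, each such path is a column $Z_{sp}$, the convexity constraint records that ship $s$ routes exactly one unit, and each packing constraint is exactly the capacity-one restriction on the In--Out arc of visit $i$, written through the indicators $\delta_{isp}$. Granting that this correspondence carries vertices to vertices, Observation 2 delivers integrality of the master extreme points directly.

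The key step for the cargo case is to observe that introducing cargo flows changes only the objective, never the constraint matrix, of the master problem. All cargo-capacity and cargo-conservation constraints (\ref{RFC5}--\ref{RFC9}) are confined to the subproblems; their sole effect on the master is to alter the profit coefficient $C_{sp}$ of each path, while the coverage pattern $\delta_{isp}$ of a path and the two families of master constraints are left untouched. Consequently the feasible region of the relaxed master problem is literally the same polytope as in the no-cargo case. Since the set of extreme points of a polytope is determined by its constraints alone and is independent of the linear objective, the extreme points inherit the integrality established above, which completes the argument.

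I expect the main obstacle to be making the identification in the second paragraph airtight, namely verifying that the path (master) polytope and the arc-based network-flow polytope of Figure \ref{f-proof} genuinely share the same vertices rather than merely resembling one another, and that the $\delta_{isp}$ coefficients faithfully reproduce the In--Out arc capacities. One must also confirm that no cargo-induced constraint ever migrates from a subproblem into the master. Once that correspondence is pinned down, the clean separation of objective from feasible region does the rest.
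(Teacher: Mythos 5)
Your proposal is correct and follows essentially the same route as the paper's own proof: use Observations 1 and 2 to identify the no-cargo master problem with a pure network flow problem (via the In/Out node-splitting of Figure \ref{f-proof}) and conclude integrality of its extreme points, then observe that adding cargo flows alters only the objective coefficients $C_{sp}$ while leaving the master constraint matrix untouched, so the feasible polytope and hence its extreme points are unchanged. Your closing remark about pinning down the vertex correspondence between the path polytope and the arc-flow polytope is a fair point of rigor that the paper itself glosses over, but it is a refinement of, not a departure from, the paper's argument.
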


\begin{proof}
	Consider first the case without cargo flows and with one ship type. Since we can reformulate the master problem as a pure network flow problem with integer-valued constraints, the extreme points will all be integer. If we now consider the case where we have product flow, the constraints of the master problem remain unchanged. While the objective function will change, which means a different extreme point may be optimal, all extreme points are integer, so the column generation solution will still be integer.
\end{proof}

For Theorem 1 to apply, we require a ``single ship type''. This means that all ships have the same capacity and costs for all arcs. We note that this is the case for all but three instances in the public data set. In each of these instances there is one ship that is different. While it is possible that these could give non-integer solutions to the LP relaxation of the master problem, in practice they did not. To handle this possibility, we would need to modify our code to branch on the ship type that visits a node. Any continuous solution that has only one ship type visiting each node can be split into a separate solution for each ship type. Theorem 1 would then hold for each ship type, and thus the optimal solution will be integer.

\subsection{Empty cargo flows in the public data set}
As described in Tierney et. al. (2014) \cite{Tierney2014}, the next step in this problem is to include the possibility of moving empty cargo containers for extra profit. This is a natural extension of the model proposed previously, except now instead of having a one-to-many delivery system as with the standard demands, we have a many-to-many system. This is because the capacity constraints on empty containers treat them as identical. The only distinction between the container types comes from the revenue earned, assuming one is more valuable than another. Our formulation of empty cargo flows is similar to that of the regular demands, except now we have one flow variable for each pair of supply-demand ports, where the demand port is reachable from the supply.

The parameters from Tierney et. al. (2014) \cite{Tierney2014} that are relevant to our implementation of empty cargo flows which have not previously been introduced are as follows:

\begin{align*}
&V^{q^+}\subseteq V' &	&\textup{Set of visits with an equipment surplus of type $q$}\\
&V^{q^-}\subseteq V' &	&\textup{Set of visits with an equipment deficit of type $q$.}\\
&V^{q^*}\subseteq V' &	&\textup{Set of visits with an equipment surplus or deficit of type $q$ }\\
& & & \textup{$(V^{q^*} = V^{q^+} \cup V^{q^-})$.}\\
&r_q^{\textup{Var}} \in \mathbb{R}^+ &	&\textup{Revenue for each TEU of equipment of type $q \in Q$ delivered.}\\
\end{align*}

We denote the empty cargo flows with variables of the form $x_{sq}^{(o,d)}$, which is the amount of empty containers flowing from origin $o$ to destination $d$ of type $q$ on ship $s$. The lazy constraints are modified appropriately, so now we keep track of which demands and how many empty containers are on board, and if the capacity is violated a constraint is added to limit these flows.

\begin{equation}
\sum_{\substack{(o,d,q)\in M \\ Reach(i,j,o,d,q) \\ j \in Out(i)}} x_s^{(o,d,q)} + \sum_{q \in Q}\sum_{\substack{(o,d)\in V^{q+}\times V^{q-} \\ d\in \text{ReachFrom(o)}}} x_{sq}^{(o,d)} \leq u_s^{dc} \label{LCC3}
\end{equation}

Before presenting any results, we should note there is a problem with the public data set: it is not profitable to move empty containers. For all instances that have empty containers, the combined cost of picking up and dropping off an empty container is greater than the revenue earned from moving it. In three sets, the revenue value is 0, so even if there were no moving costs, there is no incentive to transport empty containers. Of all sets, the most profitable empty container opportunity will add -22 per container to the objective value.

As this leads to the trivial solution of $x^{(o,d)}_{sq} = 0$ for all empty cargo variables, we have decided to change the amount of revenue associated with moving empty cargo containers. All previous revenue values were either 0 or 150, but the moving costs are typically close to 150 each, so we have decided upon a new revenue value of 300. This ensures that at least moving empty containers does not result in a loss, and there are some profitable opportunities for moving empty containers, however not so profitable as to replace the movement of actual, profitable demands.

\begin{table}
	\centering
	\caption{Comparison of objective value for original instances and when revenue for empty cargo fixed at 300 units.}
	\label{t-EmptyCargoFlows}
	\begin{tabular*}{0.8\textwidth}{@{\extracolsep{\fill}}|c|c|c|c|c|}
		\hline & \multicolumn{2}{c|}{Without empty cargo} & \multicolumn{2}{c|}{With empty cargo} \\ \hline 
		Instance & Objective ($\times10^5$) & Time (s) & Objective ($\times10^5$) & Time (s) \\ \hline
		repos8p & -8.21 & 0.60 & -8.21 & 1.04\\ \hline 
		repos9p & -8.21 & 0.58 & -8.21 & 1.24\\ \hline 
		repos11p & 137.61 & 0.88 & 137.61 & 1.94\\ \hline 
		repos14p & 138.86 & 1.24 & 138.86 & 2.84\\ \hline 
		repos19p & 5.22 & 2.06 & 15.23 & 4.05\\ \hline 
		repos21p & -11.85 & 2.17 & -11.78 & 4.80\\ \hline 
		repos22p & -11.85 & 2.71 & 7.03 & 6.33\\ \hline 
		repos23p & 5.22 & 2.09 & 15.06 & 4.14\\ \hline 
		repos26p & -53.13 & 1.28 & -53.13 & 3.31\\ \hline 
		repos29p & -32.13 & 1.68 & -32.13 & 4.21\\ \hline 
		repos33p & -10.92 & 3.32 & -10.92 & 5.93\\ \hline 
		repos35p & 138.54 & 26.30 & 157.79 & 114.47\\ \hline 
		repos37p & 139.31 & 32.96 & 163.73 & 147.15\\ \hline 
		repos38p & 160.02 & 38.47 & 179.27 & 142.00\\ \hline 
		repos40p & 161.53 & 28.27 & 185.03 & 191.76\\ \hline 
	\end{tabular*}
\end{table}

\begin{table}
	\centering
	\caption{Number of lazy constraints added for instances with fixed empty cargo revenue of 300}\label{t-LazyAddedFixedRev}
	\begin{tabularx}{0.8\textwidth}{|c|Y|Y|Y|Y|}
		\hline  & \multicolumn{2}{|c|}{Number of Ships} & \multicolumn{2}{|c|}{Number of Constraints}  \\ 
		\hline Instance & Dry & Reef & Dry & Reef \\ 
		\hline repos19p & 5 & 0 & 15 & 0 \\ 
		repos22p & 6 & 0 & 21 & 0 \\ 
		repos23p & 5 & 0 & 12 & 0 \\
		repos35p & 9 & 1 & 101 & 1 \\
		repos37p & 9 & 1 & 173 & 1 \\
		repos38p & 9 & 1 & 94 & 1 \\
		repos40p & 9 & 1 & 188 & 4 \\ 
		\hline 
	\end{tabularx} 
\end{table}

The results of this can be seen in Table \ref{t-EmptyCargoFlows}. In some instances the increased revenue does not make a significant difference because profitable demands are already being moved, or there are no ship paths connecting $(o,d)$ pairs. However, for the instances with more empty cargo opportunities, carrying empty cargo leads to higher profits. Typically these empty containers only fill previously empty spaces rather than displacing demands. The biggest difference is in the number of lazy constraints generated. Table \ref{t-LazyAddedFixedRev} shows the instances which change when the revenue is increased. There are seven instances with empty cargo flows which do not add extra lazy constraints, however for some of the larger instances the number of constraints added approaches 200. This is still an insignificant number when compared to the size of the original formulation. All problem sets with empty cargo flows are solved using lazy constraints and column generation in less than four minutes. As such, it is no more difficult to consider the movement of empty containers in our formulation with column generation and lazy constraints.

\subsection{Flexible arcs}
There is one final feature of the Tierney et. al. (2014) \cite{Tierney2014} model that has not yet been implemented in our model: flexible arcs. Tierney et. al. note in their paper that the ``fuel consumption of a ship is approximately a cubic function of the speed of the vessel'', however in their model formulation the cost associated with sailing along a flexible arc is a linear function of sailing time, and thus has inverse relation with speed. Because the only constraints on the sailing time are upper bounds, the optimal solution for travelling along flexible arcs is always to sail as fast as possible, thus minimising the time spent sailing on the arc.

Another consideration is that flexible arcs only occur to connect flexible visits to the rest of the network. Flexible visits are ports which have no available demands, but do have a surplus or supply of empty cargo. This means that to use flexible arcs, the profit from moving empty cargo must be greater than the sum of the load and unload costs, plus the movement cost associated with the flexible arcs. For the original public data set, there are no profitable situations for moving empty cargo containers, including to flexible destinations.

\subsection{Comparison of model sizes}\label{SubSecModelSize}
The real benefit from lazy constraints comes from the dramatic reduction in model size, which allows the solver to manipulate the problem much more effectively. We can make a back-of-the-envelope estimate of the contribution of the demand flow variables to the size of the problem for the original formulation, and compare it to ours with column generation and lazy constraints.

In the original formulation, there are at most $|A'||M|$ variables of the form $x_{ij}^{(o,d,q)}$, where in our implementation there are less than $|M|$ variables in each sub-problem. Since there are only $|S|$ sub-problems, our formulation has at most $|S||M|$ variables. For the largest instance (repos44p), $|A'| \approx 13700$, $|M| = 1108$ and $|S| = 11$, so in our formulation we have tens of thousands of variables, where the original has tens of millions.

\begin{table}
	\centering
	\caption{Comparison of model sizes for instance repos44p}\label{t-ModelSize}
	\begin{tabular}{|c|c|c|c|}
		\hline & Rows & Columns & Non-zeros \\
		\hline Reduced MIP & 171102 & 269796 & 1858378 \\
		\hline Column Generation & \multirow{2}{*}{136391} & \multirow{2}{*}{76612} & \multirow{2}{*}{426025} \\
		sub-problems & & & \\
		\hline Col. Gen. and Lazy & \multirow{2}{*}{1627} & \multirow{2}{*}{6715} & \multirow{2}{*}{33475} \\
		Constraints sub-problems & & & \\
		\hline
	\end{tabular}
\end{table}

Looking next at the number of constraints governing the $x$ variables, the original formulation has at most $2|A'| + |M|\bullet(|V'|-1)$ constraints, where our formulation has fewer than $2|V'| + 3|M| + \gamma$, where $\gamma$ is the number of lazy constraints added. As seen in Table \ref{t-LazyAdded}, $\gamma < 40$ for even the largest instance. Again looking at repos44p, where $|V'| = 327$, our formulation has fewer than $4\times 10^3$ constraints, where the original has over $3.8\times 10^5$. This means our model is almost three orders of magnitude smaller than the original formulation, which is why Gurobi has no problem solving the largest instances.

Table \ref{t-ModelSize} is a comparison of the actual model sizes for instance repos44p between our reduced MIP, the sub-problems of the column generation implementation and of our reformulation using lazy constraints. For the latter two cases, the numbers are averaged over all 11 sub-problems. Remember that the models also include the $y_{ij}^s$ variables. These numbers are the reported model size before Gurobi's pre-solve stage, which typically removes up to half the rows from the column generation formulation, and a few hundred rows and columns from the lazy constraints formulation. This illustrates the significant difference that using lazy constraints has made, in that the number of rows, columns and non-zeros of the column generation sub-problems have been reduced by at least one order of magnitude each.

\section{Conclusion}
The LSFRP is a large and important problem in the shipping industry. The only feasible way to solve large instances of the problem is with column generation, however it is the application of lazy constraints to the model which makes the most difference. By dramatically reducing the size of the model, modern solver packages can manipulate the problem much more effectively, resulting in significantly reduced run times. 

This application of lazy constraints can be applied to any vehicle routing problem where transshipment cannot occur and the routes must be node-distinct. Further, it is likely that many more models can benefit from lazy constraints used in a similar way to how we have used them in the column generation sub-problems. Specifically, a large part of the model (in this case demand flow on each arc) can be optimistically approximated by a much smaller model (total amount of each demand), with additional refinements added using lazy constraints, as and when required. We consider this to be a rich area of further research.

\bibliographystyle{ormsv080} 
\bibliography{Transportation-Science} 

\end{document}